\newtheorem{theorem}{Theorem}[section]
\newtheorem{remark}{Remark}[section]
\newtheorem{defi}{Definition}[section]
\newtheorem{prop}{Proposition}[section]
\newcommand{\be}{\begin{equation}}
\newcommand{\ee}{\end{equation}}
\renewcommand{\theequation}{\thesection.\arabic{equation}}
\renewcommand{\thetheorem}{\thesection.\arabic{theorem}}
\renewcommand{\theequation}{\thesection.\arabic{equation}}
\begin{document}

\title[] {A classical vertex algebra constructed with the use
  of some logarithmic formal calculus}

\author{Thomas J. Robinson}

%\thanks{}

\begin{abstract}
Using some new logarithmic formal calculus, we construct a well known
vertex algebra, obtaining the Jacobi identity directly, in an
essentially self-contained treatment.
\end{abstract}

\maketitle

\renewcommand{\theequation}{\thesection.\arabic{equation}}
\renewcommand{\thetheorem}{\thesection.\arabic{theorem}}
\setcounter{equation}{0} \setcounter{theorem}{0}
\setcounter{section}{0}
\section{Introduction}
Using some new logarithmic formal calculus, we construct a well known
vertex algebra associated with $\widehat{\goth{sl}(2)}$, obtaining the
Jacobi identity directly, in an essentially self-contained treatment.
This treatment is largely a (very) special case following the
development of Chapter 8 and the relevant preliminaries in \cite{FLM}
(see also the elementary parts of \cite{LL}, \cite{HLZ} and \cite{M1}).
We introduce certain novelties, mostly having to do with some
logarithmic calculus which grew out of considering a certain heuristic
Remark 4.2.1 in \cite{FLM}.  The main idea is to systematically
consider expressions like $e^{\log x}$, a formal power series in the
formal variable $\log x$.  Of course, $e^{\log x}$ is heuristically
equivalent to $x$ itself, and we suitably and carefully make this and
similar identifications rigorous.  The reader need not know anything
about $\widehat{\goth{sl}(2)}$, and indeed it is not even mentioned
again in this paper.  We only note it for those who already know the
answer.

We wish to direct the reader to an independent use of similar formal
logarithmic objects as those considered in this paper.  In \cite{M2}
(see especially Section 8), the author rigorously uses certain
exponentiated logarithmic terms similar to those considered here in
order to construct certain operators which they call ``mock
logarithmic intertwining operators.''

We wish also to acknowledge helpful discussions with and valuable
comments from Francesco Fiordalisi, Jim Lepowsky, Antun Milas and
Robert Wilson.

\section{Some preliminary formal calculus}
\setcounter{equation}{0} We work in the standard formal calculus
setting of \cite{FLM} and \cite{LL} and we start by briefly recalling
some elementary facts treated in those works.  We shall write $x,y,z$
as well as $x_{0},x_{1},x_{2},x_{3},\dots,y_{0},y_{1}, \dots,
\xi_{0},\xi_{1}, \dots$ etc. for commuting formal variables.  In this
paper, formal variables will always commute, and we will not use
complex variables.  All vector spaces will be over $\mathbb{C}$,
although one may easily generalize many results to the case of a field
of characteristic $0$.  Let $V$ be a vector space.  We use the
following:
\begin{align*}
V[[x,x^{-1}]]=\biggl\{ \sum_{n \in \mathbb{Z}}v_{n}x^{n}|v_{n} \in V  \biggr\}
\end{align*}
(formal Laurent series),
and some of its subspaces:
\begin{align*}
V((x))=\biggl\{ \sum_{n \in \mathbb{Z}}v_{n}x^{n}|v_{n} \in V,\, v_{n}=0
\text{ for sufficiently negative } n \biggr\}
\end{align*}
(truncated formal Laurent series),
\begin{align*}
V[[x]]=\biggl\{ \sum_{n  \geq 0}v_{n}x^{n}|v_{n} \in V \biggr\}
\end{align*}
(formal power series),
\begin{align*}
V[x,x^{-1}]=\biggl\{ \sum_{n \in \mathbb{Z}}v_{n}x^{n}|v_{n} \in V,\,
v_{n}=0 \text{ for all but finitely many } n \biggr\}
\end{align*}
(formal Laurent polynomials), and
\begin{align*}
V[x]=\biggl\{ \sum_{n \geq 0}v_{n}x^{n}|v_{n} \in V,\, v_{n}=0 \text{ for all
but finitely many } n \biggr\}
\end{align*}
(formal polynomials).  Often our vector space $V$ will be a vector
space of endomorphisms, $\text{\rm End}\,V$.  Even when $V$ is
replaced by $\text{\rm End}\,V$ some of these spaces are not algebras,
and we must define multiplication only up to a natural restrictive
condition, a {\it summability condition}.  Let $f_{i}(x)=\sum_{r \in
\mathbb{C}}a_{i}(r)x^{r} \in (\text{\rm End}\, V)\{x \}$ for $1 \leq i
\leq m$.  Then the product
\begin{align*}
f_{1}(x)f_{2}(x)\cdots f_{k}(x)
\end{align*}
exists if for every $m \in \mathbb{C}$ and $v \in V$
\begin{align*}
\sum_{n_{1}+\cdots+n_{k}=m}a_{1}(n_{1})\cdots a_{k}(n_{k})v
\end{align*}
is a finite sum.  We will use routine extensions of this principle of
summability without further comment.  One must be careful with
existence issues when dealing with associativity properties of such
products.  For instance, if $F(x), G(x)$ and $H(x) \in \text{\rm
  End}\,V[[x,x^{-1}]]$ and if the three products $F(x)G(x)$,
$G(x)H(x)$ and $F(x)G(x)H(x)$ all exist, then it is easy to verify
that
\begin{align*}
(F(x)G(x))H(x)=F(x)(G(x)H(x)),
\end{align*}
but unless some such condition is checked then ``associativity'' may
fail.  We shall use this type of associativity and routine extensions
of the principle without further comment.
%The reader may consider the ``product'':
%\begin{align*}
%\sum_{n \geq 0}x^{n}(1-x)\sum_{n < 0}x^{n}
%\end{align*}
%to see an instance of ``associativity'' failing.
\begin{remark} \rm 
  Throughout this paper, we often extend our spaces
  to include more than one variable.  We state certain properties
  which have natural extensions in such multivariable settings, which
  we will also use without further comment.
\end{remark}
We shall frequently use the notation $e^{w}$ to refer to the
formal exponential expansion, where $w$ is any formal object for which
such expansion makes sense (meaning that the coefficients are finitely
computable).  For instance, we have the linear operator
$e^{y\frac{d}{dx}}:\mathbb{C}[[x,x^{-1}]] \rightarrow
\mathbb{C}[[x,x^{-1}]][[y]]$:
\begin{align*}
e^{y\frac{d}{dx}}=\sum_{n \geq
0}\frac{y^{n}}{n!}\left(\frac{d}{dx}\right)^{n}.
\end{align*}
Simliarly we let $\log(1+w)$ refer to the formal logarithmic series
\begin{align*}
\log(1+w)=\sum_{i \geq 1}\frac{(-1)^{i+1}}{i}w^{i},
\end{align*}
whenever the expansion makes sense.  We note that we shall also be
using objects with names like $\log x$ which is a symbol for a new
atomic object, not a shorthand for a series, but rather a single new
formal variable, and it should not be confused with $\log (1+x)$ which
is a series in the entirely different formal variable, $x$.

\begin{prop} (The ``automorphism property'') Let $A$ be an algebra
  over $\mathbb{C}$.  Let $D$ be a formal derivation on $A$.  That is,
  $D$ is a linear map from $A$ to itself which satisfies the product
  rule:
\begin{align*}
D(ab)=(Da)b+a(Db), \text{ for all } a\text{ and }b \text{ in }A.
\end{align*}
Then 
\begin{align*}
e^{yD}(ab)=\left(e^{yD}a\right)\left(e^{yD}b\right),
\end{align*}
\end{prop}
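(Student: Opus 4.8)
The plan is to expand both sides as formal power series in $y$ and compare coefficients, reducing the identity to the ordinary Leibniz rule for $D$. First I would write
\begin{align*}
e^{yD}(ab)=\sum_{n\ge 0}\frac{y^{n}}{n!}D^{n}(ab),
\end{align*}
so that the coefficient of $y^{n}$ on the left is $\frac{1}{n!}D^{n}(ab)$. Next I would expand the right-hand side, using that the product of two formal power series in $y$ is computed coefficientwise (the summability condition here is automatic, since for each fixed power of $y$ only finitely many terms contribute):
\begin{align*}
\left(e^{yD}a\right)\left(e^{yD}b\right)=\left(\sum_{j\ge 0}\frac{y^{j}}{j!}D^{j}a\right)\left(\sum_{k\ge 0}\frac{y^{k}}{k!}D^{k}b\right)=\sum_{n\ge 0}y^{n}\sum_{j+k=n}\frac{1}{j!\,k!}\left(D^{j}a\right)\left(D^{k}b\right).
\end{align*}
Thus the coefficient of $y^{n}$ on the right is $\frac{1}{n!}\sum_{j+k=n}\binom{n}{j}\left(D^{j}a\right)\left(D^{k}b\right)$.

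Comparing the two sides, the proposition is equivalent to the generalized Leibniz rule
\begin{align*}
D^{n}(ab)=\sum_{j=0}^{n}\binom{n}{j}\left(D^{j}a\right)\left(D^{n-j}b\right)\quad\text{for all }n\ge 0,
\end{align*}
which I would prove by induction on $n$. The base case $n=0$ is trivial and the case $n=1$ is precisely the hypothesized product rule. For the inductive step I would apply $D$ to the expression for $D^{n}(ab)$, use linearity and the product rule on each summand $\left(D^{j}a\right)\left(D^{n-j}b\right)$, and then reindex and combine using the Pascal identity $\binom{n}{j}+\binom{n}{j-1}=\binom{n+1}{j}$ to obtain the formula for $D^{n+1}(ab)$.

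The argument is entirely routine; there is no serious obstacle. The only point requiring a word of care is the interchange of summations and the coefficientwise multiplication of the two series in $y$ — but since everything happens in $A[[y]]$ and each coefficient of $y^{n}$ involves only the finitely many pairs $(j,k)$ with $j+k=n$, the manipulations are justified with no convergence or summability issues, and the formal exponential $e^{yD}$ is well defined on $A$ in the sense described above (its coefficients are finitely computable). Hence the displayed identity holds in $A[[y]]$.
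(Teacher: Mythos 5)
Your proposal is correct and follows essentially the same route as the paper: both reduce the identity to the generalized Leibniz rule $D^{n}(ab)=\sum_{j+k=n}\binom{n}{j}(D^{j}a)(D^{k}b)$ and then divide by $n!$ and sum against powers of $y$. The only difference is that you spell out the induction proving the Leibniz rule, which the paper merely asserts.
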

\begin{proof}
Notice that

$$
D^{n}ab=\sum_{l+k=n}^{r}n!\frac{D^{k}a}{k!}\frac{D^{l}b}{l!}.
$$
Then divide both sides by $n!$ and sum over $y$ and the result follows.
\end{proof}
Essentially for the same reason as the above proof (combinatorially
speaking) we have for $w_{1}$ and $w_{2}$ commuting objects, the
formal rule
\begin{align}
\label{eq:expcomm1}
e^{w_{1}+w_{2}}=e^{w_{1}}e^{w_{2}}.
\end{align}
More generally, we shall find the following formal rule very useful.
If $[x,y]$ commutes with $x$ and $y$ then
\begin{align}
\label{eq:ecomm}
e^{x}e^{y}=e^{y}e^{x}e^{[x,y]}.
\end{align}
We essentially follow the exposition for formula (3.4.7) in
\cite{FLM}.
\begin{align*}
xe^{y}
=x\sum_{n \geq 0}\frac{y^{n}}{n!}
=\sum_{n \geq 0}\frac{y^{n}}{n!}x + \sum_{n \geq 1}\frac{ny^{n-1}}{n!}[x,y]
=e^{y}x+e^{y}[x,y]
=e^{y}(x+[x,y]),
\end{align*}
and iterating gives
\begin{align*}
x^{k}e^{y}=e^{y}(x+[x,y])^{k},
\end{align*}
and dividing by $k!$ and summing over $k \geq 0$ gives
\begin{align*}
e^{x}e^{y}=e^{y}e^{x+[x,y]},
\end{align*}
which gives the result because $[x,y]$ commutes with $x$.

We define 
\begin{align}
\label{eq:bin}
\binom{n}{k}=\frac{n(n-1)(n-2) \cdots (n-k+1)}{k!}
\end{align}
for all $n \in \mathbb{C}$ and $k \in \mathbb{Z}$.
\begin{defi}
\label{def:trans}
\begin{align*}
(x+y)^{r}=e^{y\frac{d}{dx}}x^{r} \qquad \text{for} \quad r \in \mathbb{C}.
\end{align*}
\end{defi}
This definition is equivalent to the usual {\it binomial expansion
  convention}, the convention being that the second listed variable is
expanded in nonegative powers.  The reader may easily check that this
definition coincides with the usual definition of $(x+y)^{n}$ when $n$
is a nonegative integer.  Other expansions, using (\ref{eq:bin}), also
``look'' as expected.  But we must be careful.  The following
nontrivial fact is true.
\begin{prop}
\label{prop:assocarith}
For all $n \in \mathbb{Z}$,
\begin{align*}
(x+(y+z))^{n}=((x+y)+z)^{n}.
\end{align*}
\end{prop}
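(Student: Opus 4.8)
The plan is to expand both sides explicitly by Definition~\ref{def:trans} and reduce the asserted equality to a single classical identity among binomial coefficients. It is worth stressing at the outset that this proposition is not the triviality that addition is associative: because $(x+y)^{n}$ for negative $n$ is an \emph{infinite} series in $y$, rebracketing sums of formal variables is in general illegitimate (witness the standard fact that $(x+y)^{-1}$ and $(y+x)^{-1}$ are unequal). What will save us here is that on both sides of the claimed equation every variable other than $x$ is expanded only in nonnegative powers.

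First I would expand the left side. Reading $(x+(y+z))^{n}$ via Definition~\ref{def:trans} with the formal object $y+z$ in the role of the second variable --- and using that $(y+z)^{k}$ is just the ordinary polynomial for $k\in\mathbb{Z}_{\ge 0}$ --- gives
\[
(x+(y+z))^{n}=\sum_{k\ge 0}\binom{n}{k}x^{n-k}(y+z)^{k}=\sum_{i,j\ge 0}\binom{n}{i+j}\binom{i+j}{i}x^{n-i-j}y^{i}z^{j},
\]
the last equality by the ordinary binomial theorem for $(y+z)^{k}$ and the substitution $k=i+j$. Similarly,
\[
((x+y)+z)^{n}=\sum_{j\ge 0}\binom{n}{j}(x+y)^{n-j}z^{j}=\sum_{i,j\ge 0}\binom{n}{j}\binom{n-j}{i}x^{n-i-j}y^{i}z^{j}.
\]
Both series lie in a common space in which each monomial $x^{n-i-j}y^{i}z^{j}$ receives exactly one contribution, so comparing coefficients is legitimate and the proposition reduces to the identity $\binom{n}{i+j}\binom{i+j}{i}=\binom{n}{j}\binom{n-j}{i}$ for all $i,j\in\mathbb{Z}_{\ge 0}$. (A slicker, essentially self-contained variant avoids the explicit reindexing: by~(\ref{eq:expcomm1}) the commuting operators satisfy $e^{(y+z)\frac{d}{dx}}=e^{y\frac{d}{dx}}e^{z\frac{d}{dx}}$, so $(x+(y+z))^{n}=e^{y\frac{d}{dx}}(x+z)^{n}=\sum_{j\ge 0}\binom{n}{j}z^{j}(x+y)^{n-j}=((x+y)+z)^{n}$, applying $e^{y\frac{d}{dx}}$ termwise.)

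Finally I would verify the binomial identity directly from Definition~(\ref{eq:bin}). Writing $[n]_{m}=n(n-1)\cdots(n-m+1)$, the left side equals $\frac{[n]_{i+j}}{(i+j)!}\cdot\frac{(i+j)!}{i!\,j!}=\frac{[n]_{i+j}}{i!\,j!}$, while the right side equals $\frac{[n]_{j}}{j!}\cdot\frac{[n-j]_{i}}{i!}=\frac{[n]_{j}\,[n-j]_{i}}{i!\,j!}$, and the two agree because $[n]_{j}\,[n-j]_{i}=[n]_{i+j}$. I do not expect a genuine obstacle here; the only points requiring care are that one must use the falling-product definition~(\ref{eq:bin}) rather than quotients of factorials (so the identity is even meaningful for negative $n$), and that one must note the termwise manipulations above are valid because each monomial in $y$ and $z$ is hit only once. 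In fact this computation never uses $n\in\mathbb{Z}$, so the equality holds for all $n\in\mathbb{C}$; the integer case is all that is needed in what follows.
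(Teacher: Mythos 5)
Your argument is correct. Your primary route, however, is genuinely different from the paper's: you expand both sides explicitly into double series in $y$ and $z$ and reduce the proposition to the identity $\binom{n}{i+j}\binom{i+j}{i}=\binom{n}{j}\binom{n-j}{i}$, which you verify from the falling-product definition (\ref{eq:bin}) via $[n]_{j}\,[n-j]_{i}=[n]_{i+j}$. The paper instead gives a four-line operator computation: by (\ref{eq:expcomm1}), $e^{(y+z)\frac{\partial}{\partial x}}=e^{y\frac{\partial}{\partial x}}e^{z\frac{\partial}{\partial x}}$, so $(x+(y+z))^{n}=e^{y\frac{\partial}{\partial x}}(x+z)^{n}=((x+y)+z)^{n}$ --- which is exactly the ``slicker variant'' you relegate to a parenthesis. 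The operator proof is shorter, requires no coefficient bookkeeping, and fits the paper's general strategy of encoding expansion conventions through $e^{yD}$ and the automorphism property; your explicit computation buys a concrete identification of where the content lies (a Vandermonde-type binomial identity rather than mere associativity of addition), makes visible why the statement could fail for other bracketings (each monomial $x^{n-i-j}y^{i}z^{j}$ must be hit only once, which is what the nonnegativity of the powers of $y$ and $z$ guarantees), and shows the result holds for all $n\in\mathbb{C}$, not just $n\in\mathbb{Z}$ --- though the paper's proof also yields this generality with no extra work. Your cautionary remark contrasting this with $(x+y)^{-1}\neq(y+x)^{-1}$ is apt and is precisely the point the paper is flagging by calling the fact ``nontrivial.''
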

\begin{proof}
  Recalling (\ref{eq:expcomm1}), we have
\begin{align*}
(x+(y+z))^{n}
=e^{(y+z)\frac{\partial}{\partial x}}x^{n}
=e^{y\frac{\partial}{\partial x}}
\left(e^{z\frac{\partial}{\partial x}}x^{n}\right)
=e^{y\frac{\partial}{\partial x}}(x+z)^{n}
=((x+y)+z)^{n}.
\end{align*}
\end{proof}

Given formal commuting variables $x$ $y$ etc., we take the point of
view of certain elementary material in \cite{M1} and \cite{HLZ} and
let $\log x$, $\log y$ etc. be formal variables commuting with $x$,
$y$ and each other etc.  We further require that $\frac{d}{dx}\log
x=x^{-1}$ and extend the operator to act as the unique possible
derivation on $\mathbb{C}[x,\log x]$.  We further ``formally linearly
complete'' $\frac{d}{dx}$ to act on $\mathbb{C}[[x,x^{-1},\log x,
(\log x)^{-1}]]$ in the obvious way.  We shall use other similar
routine definitions without further comment.
\begin{defi}
\begin{align*}
(\log (x+y))^{r}=e^{y\frac{d}{dx}} (\log x)^{r} \qquad r \in \mathbb{C}.
\end{align*}
\end{defi}
It is easy to verify that
\begin{align*}
\log(x+y)=\log x+\log\left(1+\frac{y}{x}\right).
\end{align*}
The following is a special case of the first part of Theorem 3.6 in
\cite{HLZ}.
\begin{prop} 
\label{prop:logtaylor}
(The logarithmic formal Taylor
  theorem)\\ For $p(x) \in \mathbb{V}[[x,x^{-1},\log x, (\log x)^{-1}]]$,
\begin{align*}
e^{y\frac{d}{dx}}p(x)=p(x+y).
\end{align*} 
\end{prop}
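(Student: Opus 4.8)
The plan is to reduce the assertion to the case of monomials $x^{m}(\log x)^{n}$ and then invoke the automorphism property. First, observe that both sides of the claimed identity are computed coefficientwise and depend linearly on $p$: the operator $e^{y\frac{d}{dx}}$ is by construction linear, and $p(x+y)$ is by definition the termwise substitution $\sum_{m,n}c_{m,n}(x+y)^{m}(\log(x+y))^{n}$ when $p(x)=\sum_{m,n}c_{m,n}x^{m}(\log x)^{n}$. One checks that for a fixed monomial $y^{k}x^{j}(\log x)^{l}$ only finitely many terms of $p$ contribute to its coefficient on either side: each application of $\frac{d}{dx}$ lowers the power of $x$ by exactly one and the power of $\log x$ by zero or one, so $\left(\frac{d}{dx}\right)^{k}\!\bigl(x^{m}(\log x)^{n}\bigr)$ is supported on $x^{m-k}$ times $(\log x)^{n},(\log x)^{n-1},\dots,(\log x)^{n-k}$, and a symmetric estimate controls the expansion of $(x+y)^{m}(\log(x+y))^{n}$. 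Hence it suffices to establish $e^{y\frac{d}{dx}}\bigl(x^{m}(\log x)^{n}\bigr)=(x+y)^{m}(\log(x+y))^{n}$ for each $m,n\in\mathbb{Z}$.

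Second, I would work inside the ordinary commutative algebra $A=\mathbb{C}[x,x^{-1},\log x,(\log x)^{-1}]$ of Laurent polynomials in $x$ and $\log x$. On $A$ the operator $\frac{d}{dx}$ is a genuine derivation — this is how it was defined on $\mathbb{C}[x,\log x]$, and the product rule persists under the routine extension to $A$ — and $x^{m},(\log x)^{n}\in A$. Therefore the automorphism property (the Proposition above, with $D=\frac{d}{dx}$) gives, in $A[[y]]$,
\begin{align*}
e^{y\frac{d}{dx}}\bigl(x^{m}(\log x)^{n}\bigr)=\bigl(e^{y\frac{d}{dx}}x^{m}\bigr)\bigl(e^{y\frac{d}{dx}}(\log x)^{n}\bigr).
\end{align*}

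Third, I unwind the two relevant definitions: by Definition \ref{def:trans}, $e^{y\frac{d}{dx}}x^{m}=(x+y)^{m}$, and by the analogous definition of $(\log(x+y))^{r}$, $e^{y\frac{d}{dx}}(\log x)^{n}=(\log(x+y))^{n}$. Substituting these into the displayed identity yields $e^{y\frac{d}{dx}}\bigl(x^{m}(\log x)^{n}\bigr)=(x+y)^{m}(\log(x+y))^{n}$, which is precisely $p(x+y)$ for this monomial $p$; combined with the reduction of the first paragraph, this proves the proposition in general.

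The substantive point is really the bookkeeping of the first paragraph — verifying that everything is well defined for an arbitrary doubly infinite series $p$ and that the monomial identity then propagates by linearity. The algebraic core, steps two and three, is essentially automatic: the conventions for $(x+y)^{r}$ and for $\log(x+y)$ were set up precisely so that $e^{y\frac{d}{dx}}$ acts multiplicatively on the monomials $x^{m}(\log x)^{n}$.
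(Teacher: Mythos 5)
Your proof is correct and follows essentially the same route as the paper's: reduce to the monomials $x^{m}(\log x)^{n}$ via a degree/support bookkeeping argument showing each coefficient is a finite sum, then apply the automorphism property for the derivation $\frac{d}{dx}$ together with the defining expansions of $(x+y)^{r}$ and $(\log(x+y))^{r}$. The only difference is cosmetic — you track the support of iterated derivatives explicitly where the paper phrases the same reduction in terms of preserved gradings and a truncation of the $\log x$ part.
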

\begin{proof}
  A summand of $p(x)$ is doubly weighted by the degrees of the formal
  variables.  The summands of $p(x+y)$ are doubly weighted by the
  total degree in $x$ and $y$ as well as separately in the degree in
  $\log x$.  The operator preserves the total degree of the $x$ and
  $y$ variables.  Thus we only need to consider $p(x)$ of the form
  $x^{k}q(x)$ where $k \in \mathbb{Z}$ and $q(x) \in \mathbb{V}[[\log
  x, (\log x)^{-1}]]$.  Let $q_{n}(x)$ be the Laurent polynomial which
  is equal to $q(x)$ but truncated in powers whose absolute value is
  less than $n$.  If we focus on a fixed power of $y$ and separately
  of $\log x$ it is clear that we only need to consider $q_{n}(x)$ in
  place of $q(x)$.  The result follows by linearity after considering
  the trivial cases $p(x)=x^{r}$ and $p(x)=(\log x )^{r}$ for $r \in
  \mathbb{C}$ and applying the automorphism property.
  \end{proof}

\begin{prop}
The vectors $e^{mx}$ for $m \in \mathbb{C}$ are linearly independent.
\end{prop}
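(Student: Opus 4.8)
The plan is to pass to coefficients. By definition $e^{mx}=\sum_{n\geq 0}\frac{m^{n}}{n!}x^{n}\in\mathbb{C}[[x]]$ for each $m\in\mathbb{C}$, so the coefficient of $x^{n}$ in $e^{mx}$ is $m^{n}/n!$. Since linear independence of an arbitrary family is equivalent to linear independence of every finite subfamily, I would suppose for contradiction that there are pairwise distinct $m_{1},\dots,m_{k}\in\mathbb{C}$ and scalars $c_{1},\dots,c_{k}\in\mathbb{C}$, not all zero, with $\sum_{j=1}^{k}c_{j}e^{m_{j}x}=0$.

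Next I would read off the coefficient of $x^{n}$ for each $n\geq 0$, obtaining $\sum_{j=1}^{k}c_{j}m_{j}^{n}/n!=0$, that is, $\sum_{j=1}^{k}c_{j}m_{j}^{n}=0$ for every $n\geq 0$. Restricting to $n=0,1,\dots,k-1$ gives a homogeneous linear system in the unknowns $c_{1},\dots,c_{k}$ whose coefficient matrix is the Vandermonde matrix $\left(m_{j}^{n}\right)_{0\leq n\leq k-1,\,1\leq j\leq k}$, with determinant $\prod_{1\leq i<j\leq k}(m_{j}-m_{i})$. This is nonzero precisely because the $m_{j}$ are distinct, so $c_{1}=\cdots=c_{k}=0$, contradicting the choice of the relation; hence the family is linearly independent.

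Alternatively, and perhaps more in keeping with the formal-calculus spirit of the paper, one can argue via the operator $\frac{d}{dx}$ on $\mathbb{C}[[x]]$: a term-by-term computation gives $\frac{d}{dx}e^{mx}=me^{mx}$, so $e^{mx}$ is an eigenvector with eigenvalue $m$, and eigenvectors attached to distinct eigenvalues are linearly independent (from a shortest nontrivial relation, apply $\frac{d}{dx}-m_{1}$ to produce a shorter one). I do not expect any genuine obstacle here; the only point requiring the hypothesis is the standard one, namely the nonvanishing of the Vandermonde determinant (equivalently, that the ``shortest relation'' argument terminates), which rests exactly on the $m_{j}$ being pairwise distinct.
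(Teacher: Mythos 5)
Your proposal is correct and is essentially the paper's own argument: the paper takes higher derivatives and reads off the constant terms, which yields exactly the Vandermonde system $\sum_{j}c_{j}m_{j}^{n}=0$ that you obtain (up to the harmless factor $n!$) by extracting the coefficient of $x^{n}$, and both conclude via the nonvanishing Vandermonde determinant for distinct $m_{j}$. Your alternative eigenvector argument is a fine variant but not needed.
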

\begin{proof}
Otherwise there is a linear combination $\sum_{m \in
\mathbb{C}}a_{m}e^{mx}=0$, where $a_{m} \in \mathbb{C}$ and only
finitely many coefficients are nonzero.  Considering the higher
derivatives and focusing on the constant term yields a Vandermonde
matrix which is nonsingular which gives that the coefficients must all
be zero.
\end{proof}

We have therefore that the space spanned by the vectors $e^{m\xi_{0}}$
$m \in \mathbb{Z}$ is the space of Laurent polynomials
$\mathbb{C}[e^{\xi_{0}},e^{-\xi_{0}}]$.  We tensor this space with the
space of polynomials $\mathbb{C}[\xi_{1},\xi_{2}, \dots]$  to get the space
\begin{align}
\label{Xi}
\Xi=\mathbb{C}[e^{\xi_{0}},e^{-\xi_{0}},\xi_{1},\xi_{2}, \dots].
\end{align}
Our vertex algebra will have $\Xi$ as its underlying space.  Of
course, $\Xi$ is also an algebra under the obvious rules.

\section{The delta function}
\setcounter{equation}{0}
We define the formal delta function by
\begin{align*}
\delta (x) = \sum_{ n \in \mathbb{Z}}x^{n}.
\end{align*}
Certain elementary identities concerning delta functions are very
convenient for dealing with the arithmetic of vertex algebras and, in
fact, in some cases, are fundamental to the very notion of vertex
algebra.  We state and prove some such identities in this section.
The following identity appeared in \cite{FLM}.  Our proof follows that
given in \cite{R}.
\begin{prop}
\label{delta}
We have the following two elementary identities:
\begin{align}
\label{twotermdelta}
y^{-1}\delta\left(\frac{x-z}{y}\right)-
x^{-1}\delta\left(\frac{y+z}{x}\right)=0
\end{align}
and
\begin{align}
\label{threetermdelta}
z^{-1}\delta\left(\frac{x-y}{z}\right)-
z^{-1}\delta\left(\frac{-y+x}{z}\right)-
x^{-1}\delta\left(\frac{y+z}{x}\right)=0.
\end{align}
\end{prop}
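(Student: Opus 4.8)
The plan is to prove both delta-function identities by the standard technique of expanding each term as a formal power series and checking that the coefficients cancel, using the binomial expansion convention (Definition \ref{def:trans}) and the associativity of formal addition (Proposition \ref{prop:assocarith}). The key point to keep in mind throughout is which variable is expanded in nonnegative powers: in $y^{-1}\delta((x-z)/y)$ the ``small'' variable is $z$, in $x^{-1}\delta((y+z)/x)$ the small variables are $y$ and $z$, and so on. One must verify at each step that the relevant products and substitutions actually exist in the appropriate space of formal series, so that the manipulations are legitimate.

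First I would establish the ``fundamental'' substitution principle: for a formal series $f(x) \in \mathbb{C}[[x,x^{-1}]]$, one has the identity
\begin{align*}
y^{-1}\delta\left(\frac{x-z}{y}\right) f(x) = y^{-1}\delta\left(\frac{x-z}{y}\right) f(y+z),
\end{align*}
whenever both sides exist, and similarly with the roles of the variables permuted; this is the usual ``delta function substitution'' lemma, proved by writing $\delta((x-z)/y) = \sum_{n\in\mathbb{Z}} (x-z)^n y^{-n}$, expanding $(x-z)^n$ in nonnegative powers of $z$ via \eqref{eq:bin}, and matching coefficients. Then for \eqref{twotermdelta} I would take $f(x) = \sum_{n\in\mathbb{Z}} x^n = \delta(x)$ applied appropriately: more directly, expand
\begin{align*}
y^{-1}\delta\left(\frac{x-z}{y}\right) = \sum_{n \in \mathbb{Z}} (x-z)^n y^{-n-1} = \sum_{n\in\mathbb{Z}} \sum_{k \geq 0} \binom{n}{k} x^{n-k}(-z)^k y^{-n-1},
\end{align*}
and compare with the analogous double expansion of $x^{-1}\delta((y+z)/x) = \sum_{m\in\mathbb{Z}} (y+z)^m x^{-m-1}$, where now $(y+z)^m$ is expanded in nonnegative powers of $z$ (since $y$ is the first-listed variable inside, but actually one expands the second-listed, so care is needed — here $z$ is second and is expanded nonnegatively). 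Reindexing so that the powers of $x$, $y$, $z$ match, both series reduce to $\sum \binom{n}{k} x^{j} y^{j'} z^{k}$ over the same index set, giving the cancellation.

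For \eqref{threetermdelta} I would first observe that $z^{-1}\delta((x-y)/z) - z^{-1}\delta((-y+x)/z)$ is \emph{not} zero: the two terms differ in whether $x$ or $y$ is expanded in nonnegative powers. Using the two-term identity \eqref{twotermdelta} (with a suitable relabeling of variables, replacing $x \to x$, $y \to z$, $z \to y$, and also $x \to y$, $y \to z$, $z \to -x$ or similar), each of these two terms can be rewritten: $z^{-1}\delta((x-y)/z)$ equals $x^{-1}\delta((z+y)/x) = x^{-1}\delta((y+z)/x)$ after applying \eqref{twotermdelta}, while $z^{-1}\delta((-y+x)/z)$ requires the analogous identity with the small variable being $x$ rather than $y$ — and that version of the two-term identity is the one that does \emph{not} collapse to the same thing, which is exactly why the three-term identity has the structure it does. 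I would then combine the resulting expressions and check that the telescoping is exact. The main obstacle, and the step deserving the most care, is bookkeeping the binomial expansion conventions consistently across all the relabelings — ensuring that when \eqref{twotermdelta} is applied to rewrite a term of \eqref{threetermdelta}, the variable being expanded nonnegatively matches on both sides — and verifying the existence of each product so that the formal manipulations (in particular the associativity used implicitly) are justified.
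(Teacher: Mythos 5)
Your plan for \eqref{twotermdelta} is fine: expanding both sides as double series and matching coefficients via $\binom{k-n-1}{k}=(-1)^{k}\binom{n}{k}$ does work, and it is an acceptable alternative to the paper's route (the paper instead first proves the decomposition $y^{-1}\delta(x/y)=(x-y)^{-1}+(y-x)^{-1}$ and then shifts by the formal Taylor theorem). But your treatment of \eqref{threetermdelta} contains a genuine error and then trails off before supplying the actual cancellation mechanism. The relabeled two-term identity gives
\begin{align*}
z^{-1}\delta\left(\frac{x-y}{z}\right)=x^{-1}\delta\left(\frac{z+y}{x}\right),
\end{align*}
and $x^{-1}\delta\left(\frac{z+y}{x}\right)$ is \emph{not} equal to $x^{-1}\delta\left(\frac{y+z}{x}\right)$: by the binomial expansion convention the first expands $y$ in nonnegative powers and the second expands $z$ in nonnegative powers, and for negative exponents these are different formal series. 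Indeed, if your claimed equality held, \eqref{threetermdelta} would collapse to $z^{-1}\delta\left(\frac{-y+x}{z}\right)=0$, which is absurd. So the proposed rewriting of the first term of \eqref{threetermdelta} is wrong, and the remaining step (``combine the resulting expressions and check that the telescoping is exact'') is not an argument.

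What is missing is precisely the device the paper uses: the identity $y^{-1}\delta\left(\frac{x}{y}\right)=(x-y)^{-1}+(y-x)^{-1}$, i.e.\ the splitting of each delta function into the two opposite binomial expansions of the same inverse. Applying the formal Taylor theorem to shift arguments, each of the three terms of \eqref{threetermdelta} becomes a sum of two expressions of the form $(\cdot)^{-1}$, and the resulting six terms cancel in pairs by Proposition \ref{prop:assocarith} (which you cite in your preamble but never actually deploy). Without some such decomposition, a direct coefficient-by-coefficient verification of the three-term identity is a genuinely three-variable bookkeeping problem that your sketch does not carry out. I would recommend either proving the splitting identity and finishing as above, or else writing out the full triple-series coefficient comparison explicitly; as it stands the proof of \eqref{threetermdelta} is not established.
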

\begin{proof}
First observe that 
\begin{align}
\label{eq:del2}
y^{-1}\delta\left(\frac{x}{y}\right)
=
\sum_{l < 0}x^{l}y^{-l-1}
+
\sum_{l \geq 0}x^{l}y^{-l-1}
=
e^{-y\frac{d}{dx}}x^{-1}+e^{-x\frac{d}{dy}}y^{-1}
=
(x-y)^{-1}+(y-x)^{-1}.
\end{align}
Then by the formal Taylor
theorem
\begin{align*}
y^{-1}\delta\left(\frac{x+z}{y}\right)&=e^{z\frac{d}{dx}}
y^{-1}\delta\left(\frac{x}{y}\right)\\
&=e^{z\frac{d}{dx}}((x-y)^{-1}+(y-x)^{-1})\\
&=((x+z)-y)^{-1}+(y-(x+z))^{-1}.
\end{align*}
Being careful with minus signs, we may respectively expand all the
terms in the left-hand side of (\ref{threetermdelta}) to get
\begin{align*}
((x-y)-z)^{-1}+(z-(x-y))^{-1}\\
-((-y+x)-z)^{-1}-(z-(-y+x))^{-1}\\
-((y+z)-x)^{-1}-(x-(y+z))^{-1}.
\end{align*}
Now we get by Proposition \ref{prop:assocarith} that the first and
sixth terms, the third and fifth terms, and the second and fourth
terms pairwise cancel each other thus giving us
(\ref{threetermdelta}).  The other identity may be proved in a similar
fashion.
\end{proof}
We note the following easily verified identity extending
(\ref{eq:del2}):
\begin{align}
\label{eq:deltder}
\frac{1}{n!}\left(\frac{\partial}{\partial y}\right)^{n}
y^{-1}\delta\left(\frac{x}{y}\right)
=
\frac{(-1)^{n}}{n!}\left(\frac{\partial}{\partial x}\right)^{n}
y^{-1}\delta\left(\frac{x}{y}\right)
=
(x-y)^{-n-1}-(-y+x)^{-n-1}.
\end{align}

In addition to the above identities, delta functions have certain
crucial substitution properties.  To state such a property in the
generality needed, we need the following logarithmic substitution
operator.  It is crucial to note as in \cite{HLZ} that whereas
\begin{align*}
\log(e^{x})=\log(1+(e^{x}-1))=x
\end{align*}
it is {\it not} true that
\begin{align*}
e^{\log x}=x,
\end{align*}
since the left hand side is a series in the variable $\log x$.  But
nonetheless, the two expressions are heuristically equal and we
develop a means of rigorizing this heuristic suggestion next.  For $V$
which does not depend on $\log x$ or $x$ we shall define an operator
\begin{align*}
\phi_{x}:V[[x,x^{-1}]][e^{\log x},e^{-\log x}] \rightarrow V[[x,x^{-1}]].
\end{align*}
The operator $\phi_{x}$ essentially substitutes $x$ for $e^{\log x}$.
For $f(x)= \sum_{n \in \mathbb{Z}}v_{n}(x)e^{n
  \log x} \in V[[x,x^{-1}]][e^{\log x},e^{-\log x}]$ where $v_{n}(x)
\in V[[x,x^{-1}]]$, let
\begin{align*}
\phi_{x}(f(x))= \sum_{n \geq 0}v_{n}(x)x^{n}.
\end{align*}
This is well defined because $v_{n}(x)x^{n}$ is a well defined formal
Laurent series and $\phi_{x}(f(x))$ is a finite sum of such series.
We shall use the notation
\begin{align*}
\phi_{x,y}=\phi_{x}\circ \phi_{y}
\end{align*}
etc., if we wish to indicate the operator substituting for more than
one logarithmic variable in a similar way.

\begin{prop}
The operator $\phi_{x}$ commutes with $\frac{d}{dx}$.
\end{prop}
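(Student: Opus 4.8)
The plan is to verify the commutation on a general element of the domain by direct computation, reducing to a single monomial $v_n(x)e^{n\log x}$ by linearity and continuity (the sum over $n$ is finite, and $\frac{d}{dx}$ acts coefficientwise). So fix $n \in \mathbb{Z}$ and $v(x) \in V[[x,x^{-1}]]$, and consider $f(x) = v(x)e^{n\log x}$. First I would compute $\frac{d}{dx}f(x)$. Since $\frac{d}{dx}\log x = x^{-1}$ and $\frac{d}{dx}$ acts as a derivation on $V[[x,x^{-1},\log x,(\log x)^{-1}]]$ (extended coefficientwise), we have $\frac{d}{dx}\bigl(e^{n\log x}\bigr) = n x^{-1} e^{n\log x}$, hence
\begin{align*}
\frac{d}{dx}f(x) = v'(x)e^{n\log x} + n\,v(x)x^{-1}e^{n\log x} = \bigl(v'(x) + n x^{-1}v(x)\bigr)e^{n\log x},
\end{align*}
where $v'(x) = \frac{d}{dx}v(x)$.

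Next I would apply $\phi_x$ to this. By definition $\phi_x$ replaces $e^{n\log x}$ by $x^n$ when $n \geq 0$ and by $0$ when $n < 0$, acting as the identity on the coefficient in $V[[x,x^{-1}]]$. So for $n \geq 0$,
\begin{align*}
\phi_x\Bigl(\frac{d}{dx}f(x)\Bigr) = \bigl(v'(x) + n x^{-1}v(x)\bigr)x^{n} = v'(x)x^{n} + n\,v(x)x^{n-1} = \frac{d}{dx}\bigl(v(x)x^{n}\bigr) = \frac{d}{dx}\phi_x(f(x)),
\end{align*}
using the product rule for $\frac{d}{dx}$ on $V[[x,x^{-1}]]$ in the penultimate step. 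For $n < 0$ both sides are $0$: the left side because $\frac{d}{dx}f(x)$ is still supported on the single exponent $e^{n\log x}$ with $n<0$, so $\phi_x$ kills it; the right side because $\phi_x(f(x)) = 0$. Summing over the finitely many $n$ occurring in a general $f(x)$ gives the result.

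The only point requiring a little care — and the one I would flag explicitly — is the borderline behavior, namely that differentiation does not move mass between different powers of $e^{\log x}$: the operator $\frac{d}{dx}$ acts diagonally in the $e^{\log x}$-grading (it sends $e^{n\log x}$ to a multiple of itself), which is exactly why it interacts cleanly with the truncation built into $\phi_x$. Had $\frac{d}{dx}$ mixed the grading, the identity could fail at the $n=0$ boundary. Since here there is no such mixing, the verification is genuinely a one-line computation per graded piece, and no convergence or summability subtleties arise because each $f(x)$ is a polynomial in $e^{\log x}, e^{-\log x}$.
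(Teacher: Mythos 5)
Your proof is correct and takes essentially the same approach as the paper: reduce by linearity to a single monomial $v(x)e^{n\log x}$ and check that both orders of operations give $\bigl(v'(x)+nx^{-1}v(x)\bigr)x^{n}$, the key point being (as you note) that $\frac{d}{dx}$ preserves the $e^{\log x}$-grading. Your separate $n<0$ branch comes from reading the displayed definition of $\phi_{x}$ literally (the sum $\sum_{n\ge 0}$ there appears to be a typo for $\sum_{n\in\mathbb{Z}}$, since the paper immediately afterwards computes $\phi_{x}e^{-2\log x}=x^{-2}$), but your argument goes through under either reading, so this is harmless.
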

\begin{proof}
  It is enough to consider acting on elements of the form $x^{n}e^{m
    \log x}$.  We have
\begin{align*}
\phi_{x} \frac{d}{dx}x^{n}e^{m \log x}
%&=\phi_{x} \left(nx^{n-1}e^{m \log x}+mx^{n-1}e^{m \log x}\right)\\
=\phi_{x} \left((n+m)x^{n-1}e^{m \log x}\right)
=(m+n)x^{m+n-1}
\end{align*}
and
\begin{align*}
\frac{d}{dx} \phi_{x} x^{n}e^{m \log x}
&=
\frac{d}{dx}x^{n+m}=(m+n)x^{m+n-1}.
\end{align*}
\end{proof}
The following is an example of how we shall typically be applying $\phi_{x}$,
\begin{align*}
\phi_{x}e^{-2 \log (x-z)}
=\phi_{x}e^{-z\frac{\partial}{\partial x}}e^{-2 \log x}
=e^{-z\frac{\partial}{\partial x}} \phi_{x} e^{-2 \log x}
=e^{-z\frac{\partial}{\partial x}}x^{-2}
=(x-z)^{-2}.
\end{align*}

\begin{prop}
\label{prop:deltalogsub}
For 
\begin{align*}
f(x,y,z) \in \text{\rm End}\,V[[x,x^{-1},e^{\log x},e^{-\log
  x},y,y^{-1},e^{\log y},e^{-\log y},z,z^{-1},e^{\log z},e^{-\log
  z}]]
\end{align*}
such that for each fixed $v \in V$
\begin{align*}
  f(x,y,z)v \in \text{\rm End}\,V[[x,x^{-1},y,y^{-1}]][e^{\log
    x},e^{-\log x},e^{\log y},e^{-\log y}]((z))[e^{\log z},e^{-\log
    z}]
\end{align*}
and such that
\begin{align*}
\text{\rm lim}_{x \rightarrow y}f(x,y,z)
\end{align*}
exists (where the ``limit'' is the indicated formal substitution), we
 have
\begin{align*}
\phi_{x,y,z}\delta\left(\frac{y+z}{x}\right)f(x,y,z)
&=
\phi_{x,y,z}\delta\left(\frac{y+z}{x}\right)f(y+z,y,z)\\
&=
\phi_{x,y,z}\delta\left(\frac{y+z}{x}\right)f(x,x-z,z).
\end{align*}
\end{prop}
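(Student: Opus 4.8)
The plan is to reduce the whole identity to the classical (non-logarithmic) delta-function substitution property, together with an analysis of how $\phi_{x,y,z}$ interacts with the substitutions $x\mapsto y+z$ and $y\mapsto x-z$. By $\mathbb{C}$-linearity, and since for each $v$ the hypothesis writes $f(x,y,z)v$ as a finite sum of monomials $e^{n\log x}e^{p\log y}e^{q\log z}$ (with $n,p,q\in\mathbb{Z}$) whose coefficients lie in $\text{\rm End}\,V[[x,x^{-1},y,y^{-1}]]((z))$, it suffices to treat $f$ of the form $h(x,y,z)e^{n\log x}e^{p\log y}e^{q\log z}$ with $h$ logarithm-free. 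I would remark in passing that the $((z))$-truncation and the existence of $\lim_{x\to y}f$ are exactly what is needed to guarantee that every formal expression written below is well defined, and that checking this is routine bookkeeping.

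First I would observe that $\phi_{x,y,z}$ passes through the delta function: writing the logarithm-free series $\delta\left(\frac{y+z}{x}\right)=\sum_{k\in\mathbb{Z}}(y+z)^{k}x^{-k}$ and using that $\phi_{x,y,z}$ is linear and commutes with multiplication by logarithm-free series, one gets
\begin{align*}
\phi_{x,y,z}\left(\delta\left(\frac{y+z}{x}\right)f(x,y,z)\right)=\delta\left(\frac{y+z}{x}\right)\left(\phi_{x,y,z}f\right)(x,y,z),
\end{align*}
and likewise with $f(x,y,z)$ replaced by $f(y+z,y,z)$ or $f(x,x-z,z)$. So the proposition follows from two facts: (i) the classical substitution identities $\delta\left(\frac{y+z}{x}\right)g(x,y,z)=\delta\left(\frac{y+z}{x}\right)g(y+z,y,z)=\delta\left(\frac{y+z}{x}\right)g(x,x-z,z)$ for logarithm-free $g$; and (ii) the commutation $\left(\phi_{x,y,z}f\right)(y+z,y,z)=\phi_{x,y,z}\left(f(y+z,y,z)\right)$, and similarly for $y\mapsto x-z$.

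For (i), reindexing $\delta\left(\frac{y+z}{x}\right)x^{m}=\sum_{k}(y+z)^{k}x^{m-k}$ gives $\delta\left(\frac{y+z}{x}\right)x^{m}=(y+z)^{m}\delta\left(\frac{y+z}{x}\right)$ (using the automorphism property to factor $(y+z)^{k+m}$), which yields the first equality term-by-term in the $x$-expansion of $g$; for the second I would first rewrite $x^{-1}\delta\left(\frac{y+z}{x}\right)=y^{-1}\delta\left(\frac{x-z}{y}\right)$ using (\ref{twotermdelta}) and reindex again to get $\delta\left(\frac{y+z}{x}\right)y^{m}=(x-z)^{m}\delta\left(\frac{y+z}{x}\right)$. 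Fact (ii) is the logarithmic heart of the matter. On the monomial $h(x,y,z)e^{n\log x}e^{p\log y}e^{q\log z}$ the left side of (ii) is $h(y+z,y,z)(y+z)^{n}y^{p}z^{q}$, while for the right side I would use $\log(y+z)=\log y+\log(1+\frac{z}{y})$ from the excerpt to write $e^{n\log(y+z)}=e^{n\log y}\,e^{n\log(1+\frac{z}{y})}=e^{n\log y}(1+\frac{z}{y})^{n}$ --- here $e^{n\log(1+\frac{z}{y})}=(1+\frac{z}{y})^{n}$ \emph{genuinely}, because $\log(1+\frac{z}{y})$ is an honest power series, unlike the atomic variable $\log y$, so indeed $e^{\log(1+w)}=1+w$ --- so that $f(y+z,y,z)=h(y+z,y,z)(1+\frac{z}{y})^{n}e^{(n+p)\log y}e^{q\log z}$ and $\phi_{x,y,z}$ sends this to $h(y+z,y,z)(1+\frac{z}{y})^{n}y^{n+p}z^{q}=h(y+z,y,z)(y+z)^{n}y^{p}z^{q}$, using $y^{n}(1+\frac{z}{y})^{n}=(y+z)^{n}$ (Definition \ref{def:trans}). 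The two sides of (ii) agree, and the case $y\mapsto x-z$ is identical via $\log(x-z)=\log x+\log(1-\frac{z}{x})$ and $x^{p}(1-\frac{z}{x})^{p}=(x-z)^{p}$. I expect step (ii) --- and within it the point that although $e^{\log x}\ne x$ one nevertheless has $e^{\log(1+w)}=1+w$ for an honest series $w$, which is precisely what makes the substitution commute with $\phi$ --- to be the main obstacle, with the verification that all objects are well defined under the hypotheses a close second.
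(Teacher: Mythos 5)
Your proof is correct and follows essentially the same route as the paper's: the paper reduces to monomials $x^{l_1}e^{l_2\log x}y^{m_1}e^{m_2\log y}z^{n_1}e^{n_2\log z}$ and then declares the rest ``an easy calculation,'' and your steps (i) and (ii) --- the classical delta-function substitution for logarithm-free series together with the identity $e^{n\log(y+z)}=e^{n\log y}\left(1+\tfrac{z}{y}\right)^{n}$, which is what makes $\phi_{x,y,z}$ commute with the substitutions --- are precisely that calculation written out. No gap to report; you have simply supplied the details the paper omits.
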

\begin{proof}
Since we are substituting terms homogeneous of degree 1, for other
terms homogeneous of degree 1, therefore after $\phi_{x,y,z}$ is
applied we need only consider homogeneous $f(x,y,z)$.  The limit
restriction on $f(x,y,z)$ together with the fact that the logarithmic
terms only involve finite sums means that we we need only consider
when
\begin{align*}
f(x,y,z)
=
x^{l_{1}}e^{l_{2}\log x}y^{m_{1}}e^{m_{2}\log y}z^{n_{1}}e^{n_{2}\log z}.
\end{align*}
This follows from an easy calculation.
\end{proof}

\section{Some general log vertex operators}
Let $x,\log x$ and $\xi_{i}$ for $i \geq 0$ be formal commuting
variables and consider the space
$\mathbb{C}[\xi_{1},\xi_{2},\dots][[\xi_{0}]]$.  We define some
operators on this space (cf. (4.2.1) in \cite{FLM}).  Let
\begin{align*}
h^{+}(x)
=
2\log x \frac{\partial}{\partial \xi_{0}}  
-2\sum_{n > 0} \frac{x^{-n}}{n}\frac{\partial}{\partial \xi_{n}}
\end{align*}
and
\begin{align*}
h^{-}(x)=\sum_{m \geq 0} \xi_{m}x^{m}.
\end{align*}
Let 
\begin{align*}
h(x)=h^{+}(x)+h^{-}(x),
\end{align*}
so that
\begin{align*}
h(x):\mathbb{C}[\xi_{1},\xi_{2},\dots][[\xi_{0}]]
\rightarrow
\mathbb{C}[\xi_{1},\xi_{2},\dots][[\xi_{0}]]((x))[\log
x].
\end{align*}

We use colons to denote normally ordered products, where normal
ordering places differential operators to the right side of products.
For instance, for $n \geq 0$
\begin{align*}
:\frac{\partial}{\partial \xi_{n}}\xi_{n}:
=
:\xi_{n}\frac{\partial}{\partial \xi_{n}}:
=
\xi_{n}\frac{\partial}{\partial \xi_{n}},
\end{align*} 
with the notation extended in the obvious way over longer products.
We have
\begin{align*}
:h^{-}(x)h^{+}(x):=:h^{-}(x)h^{+}(x):=h^{-}(x)h^{+}(x),
\end{align*}
so that for $m \in \mathbb{Z}$ and $n \geq 0$
\begin{align}
\label{eq:hpower}
:mh(x)^{n}:=:(mh^{-}(x)+mh^{+}(x))^{n}:
=\sum_{l+k=n}
\frac{(mh^{-}(x))^{l}}{l!}
\frac{(mh^{+}(x))^{k}}{k!}.
\end{align}

Let us consider $:mh(x)^{n}:$ acting on an element of
$\mathbb{C}[\xi_{1},\xi_{2},\dots][[\xi_{0}]]$.  In particular,
focusing on the coefficient of $x$ and separately $\log x$ and
separately $\xi_{0}$ all raised to fixed powers then it is clear that
the answer is zero for sufficiently large $n$.  Therefore dividing by
$n!$ and summing, we get the well defined operator (cf. (4.2.6) and
also (4.2.12) in \cite{FLM})
\begin{align*}
:e^{mh(x)}:.
\end{align*}
Recall (\ref{Xi}), the space
\begin{align*}
\Xi=\mathbb{C}[e^{\xi_{0}},
e^{-\xi_{0}},\xi_{1},\xi_{2},\dots].
\end{align*}
We may consider $:e^{mh(x)}:$ acting on $\Xi$ as in the next
proposition.
\begin{prop}
  For $m \in \mathbb{Z}$ $:e^{mh(x)}::\Xi \rightarrow
  \Xi((x))[e^{\log x},e^{-\log x}]$
\begin{align*}
  :e^{mh(x)}: = e^{mh^{-}(x)}e^{mh^{+}(x)} =
  e^{m\xi_{0}}e^{mh^{-}(x)-m\xi_{0}}e^{mh^{+}(x)-2 \log x
    \frac{\partial}{\partial \xi_{0}}}e^{2 \log x
    \frac{\partial}{\partial \xi_{0}} }.
\end{align*}
\end{prop}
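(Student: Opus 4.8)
The plan is to obtain both equalities directly from the two formal exponential rules already in hand --- the normal-ordering expansion \eqref{eq:hpower} and the additivity rule \eqref{eq:expcomm1} --- and then to use the resulting factored form to verify that $:e^{mh(x)}:$ genuinely maps $\Xi$ into $\Xi((x))[e^{\log x},e^{-\log x}]$. For the first equality I would start from the definition of $:e^{mh(x)}:$ as the (suitably normalized) sum of the normal-ordered powers $:mh(x)^{n}:$, substitute \eqref{eq:hpower} for each $:mh(x)^{n}:$, and recognize the resulting double sum $\sum_{l,k\ge 0}\frac{(mh^{-}(x))^{l}}{l!}\frac{(mh^{+}(x))^{k}}{k!}$ as the Cauchy product $e^{mh^{-}(x)}e^{mh^{+}(x)}$. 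Along the way I would check that this product of operator-valued series exists in the summability sense: on a fixed element of $\Xi$ the derivations $\partial/\partial\xi_{n}$ with $n\ge 1$ occurring in $h^{+}(x)$ act locally nilpotently, while $\partial/\partial\xi_{0}$ acts on a monomial $(e^{\xi_{0}})^{k}p(\xi_{1},\xi_{2},\dots)$ as multiplication by the scalar $k$, so that $e^{mh^{+}(x)}$ already produces an element of $\Xi((x))[e^{\log x},e^{-\log x}]$ and left multiplication by the power series $e^{mh^{-}(x)}$ is then summable power of $x$ by power of $x$.

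For the second equality nothing is needed beyond \eqref{eq:expcomm1}, applied separately to each of the two factors; no relation between $h^{-}(x)$ and $h^{+}(x)$ is used, since the order ($h^{-}$ then $h^{+}$) stays fixed. Inside $e^{mh^{-}(x)}$ I would split the exponent as $mx_{0}+(mh^{-}(x)-mx_{0})$, where $mx_{0}$ denotes the $x$-free term $m\xi_{0}$ and $mh^{-}(x)-mx_{0}=m\sum_{n\ge 1}\xi_{n}x^{n}$; the two summands commute, so \eqref{eq:expcomm1} gives $e^{mh^{-}(x)}=e^{mx_{0}}e^{mh^{-}(x)-mx_{0}}$. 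Inside $e^{mh^{+}(x)}$ I would split the exponent as $(mh^{+}(x)-2\frac{\partial}{\partial\xi_{0}}\log x)+2\frac{\partial}{\partial\xi_{0}}\log x$; both summands are built only from $\log x$, $\partial/\partial\xi_{0}$ and the $\partial/\partial\xi_{n}$, all of which commute with $\log x$ and with $\partial/\partial\xi_{0}$, so the summands commute and \eqref{eq:expcomm1} gives $e^{mh^{+}(x)}=e^{mh^{+}(x)-2\frac{\partial}{\partial\xi_{0}}\log x}e^{2\frac{\partial}{\partial\xi_{0}}\log x}$. Concatenating these in the fixed order yields the displayed fourfold product.

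The real content --- and the point of this particular grouping --- is the codomain assertion, which I expect to be the main (if modest) obstacle: one must see that the fourfold product, read from right to left, sends $\Xi$ into $\Xi((x))[e^{\log x},e^{-\log x}]$ even though $\Xi$ contains only $e^{\pm\xi_{0}}$ and not $\xi_{0}$ itself, so that multiplication by $\xi_{0}$ is unavailable. I would argue: $e^{2\frac{\partial}{\partial\xi_{0}}\log x}$ sends $(e^{\xi_{0}})^{k}p(\xi_{1},\xi_{2},\dots)$ to $(e^{\log x})^{2k}(e^{\xi_{0}})^{k}p(\xi_{1},\xi_{2},\dots)\in\Xi[e^{\log x},e^{-\log x}]$ (with $k\in\mathbb{Z}$, only integral powers of $e^{\log x}$ arising); next, $e^{mh^{+}(x)-2\frac{\partial}{\partial\xi_{0}}\log x}$ involves only the locally nilpotent $\partial/\partial\xi_{n}$ ($n\ge 1$) together with a residual multiple of $\log x\,\partial/\partial\xi_{0}$ that again contributes only integral powers of $e^{\log x}$, so this step terminates and introduces at worst finitely many negative powers of $x$; then $e^{mh^{-}(x)-mx_{0}}=\exp(m\sum_{n\ge 1}\xi_{n}x^{n})$ is honest multiplication by an element of $\Xi[[x]]$; and finally $e^{mx_{0}}=(e^{\xi_{0}})^{m}$ is multiplication by an element of $\Xi$, which is legitimate precisely because $m\in\mathbb{Z}$. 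Apart from this bookkeeping of the locally finite action of the various derivations and the use of the hypothesis $m\in\mathbb{Z}$, the two equalities themselves are immediate from \eqref{eq:hpower} and \eqref{eq:expcomm1}.
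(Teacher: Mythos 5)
Your proof is correct and takes essentially the same route as the paper's, which simply instructs the reader to sum the normal-ordered power formula (\ref{eq:hpower}) over $n$ and asserts that the domain and range restriction ``follows easily from the expanded formula.'' You have just filled in the details the paper leaves implicit: the Cauchy-product identification, the two applications of (\ref{eq:expcomm1}) to commuting summands, and the factor-by-factor codomain bookkeeping (including where $m\in\mathbb{Z}$ is used).
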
             
\begin{proof}
  Divide (\ref{eq:hpower}) by $n!$ and sum over $n \geq 0$.  That we
  may restrict the domain and range follows easily from the expanded
  formula.
\end{proof}

We define a linear and ``normal multiplicative'' map on
$\Xi$.  We shall explain ``normal multiplicative'' in a
moment.  First let
\begin{align}
\label{vac}
\Upsilon(1,x)=1.
\end{align}
Next, for $n \geq 1$, let
\begin{align*}
\Upsilon(\xi_{n},x)=\frac{h^{(n)}(x)}{n!}.
\end{align*}
And extend naturally to let
\begin{align}
\label{Upsilon}
\Upsilon(e^{m\xi_{0}},x)=:e^{mh(x)}:,
\end{align}
for $m \in \mathbb{Z}$. 
\begin{remark} \rm
The reader should compare the operator (\ref{Upsilon}) with the
heuristic operator in Remark 4.2.1 of \cite{FLM}, which provided the
original motivation for developing this, now rigorous, operator.
\end{remark}
Then for $p,q \in \Xi$ we require that
\begin{align}
\label{*0}
\Upsilon(pq,x)=:\Upsilon(p,x)\Upsilon(q,x):,
\end{align}
(which is what we mean by ``normal multiplicative'').  This determines a
unique linear map
\begin{align*}
\Upsilon(\cdot,x)\cdot: 
\Xi \otimes \Xi 
\rightarrow 
\Xi((x))[e^{\log x},e^{-\log x}].
\end{align*}

Extending the notation in the obvious way, we have
\begin{align*}
\Upsilon(h^{-}(z),x)
=\Upsilon
\left(
\sum_{m \geq 0}\xi_{m}z^{m},x 
\right)
=\sum_{m \geq 0}\frac{h^{(m)}(x)}{m!}z^{m}
=e^{z\frac{d}{dx}}h(x)
=h(x+z).
\end{align*}
So
\begin{align}
\label{*1}
\Upsilon(e^{h^{-}(z)},x)=:e^{h(x+z)}:,
\end{align}
and the range of this operator is $\Xi((x))[e^{\log
  x},e^{-\log x}][[z]]$, which is easy to see using Proposition
\ref{prop:logtaylor}.  More generally, the range of
\begin{align*}
:e^{h(x+y)}e^{h(x+z)}:
\end{align*}
is $\Xi((x))[e^{\log x},e^{-\log x}][[y,z]]$ which can be
seen by first considering
\begin{align*}
:e^{h(x_{1}+y)}e^{h(x_{2}+z)}:
\end{align*}
and then setting $x_{1}$ and $x_{2}$ equal to $x$.  We shall use such
reasoning without comment below.

We have
%\begin{align*}
%(h^{+}(x))^{k}e^{n\xi_{0}}
%=
%\left(2\frac{\partial}{\partial \xi_{0}}\right)^{k}
%e^{n\xi_{0}}(\log x)^{k}
%=(2n\log x)^{k}e^{n\xi_{0}},
%\end{align*}
%which gives
%\begin{align*}
%e^{mh^{+}(x)}e^{n\xi_{0}}=e^{2mn\log x}e^{n\xi_{0}}.
%\end{align*}
%Similarly, 
\begin{align*}
h^{+}(x)e^{nh^{-}(y)}
&=
\left(2 \log x \frac{\partial}{\partial \xi_{0}} 
-2\sum_{k > 0} \frac{x^{-k}}{k}\frac{\partial}{\partial \xi_{k}}\right)
e^{n\sum_{m \geq 0} \xi_{m}y^{m}}\\
&=\left(2n \log x - 2\sum_{k > 0}\frac{n}{k}y^{k}x^{-k}\right)
e^{nh^{-}(y)}\\
&=2n\left(\log x + \log \left(1-\frac{y}{x}\right)\right)
e^{nh^{-}(y)}\\
&=2n\log (x-y)
e^{nh^{-}(y)},\\
\end{align*}
so that, letting $k=2mn$,
\begin{align}
\label{*2}
e^{-mh^{+}(x)}e^{nh^{-}(y)}=e^{-k\log(x-y)}e^{nh^{-}(y)}.
\end{align}
We note that we may set $y=0$ in this formula.  We will use this
later.
\section{Jacobi Identity}
For $m,n \in \mathbb{Z}$, recalling (\ref{eq:ecomm})
\begin{align*}
:e^{mh(x)}::e^{nh(z)}:
&=
e^{mh^{-}(x)}
e^{mh^{+}(x)}
e^{nh^{-}(z)}
e^{nh^{+}(z)}
=
:e^{mh(x)+nh(z)}:
e^{[mh^{+}(x),nh^{-}(z)]},
\end{align*}
because (cf. the proof of Proposition 4.3.1 in \cite{FLM})
\begin{align*}
\left[h^{+}(x),h^{-}(z)\right]
&=\left[2\frac{\partial}{\partial \xi_{0}} \log x 
-2\sum_{n > 0} \frac{1}{n}\frac{\partial}{\partial \xi_{n}}x^{-n}
,
\sum_{m \geq 0} \xi_{m}z^{m}
\right]\\
&=\left[
2\frac{\partial}{\partial \xi_{0}},\xi_{0}\right]\log x
+\sum_{n,m >0}
\left[
\frac{-2}{n}\frac{\partial}{\partial \xi_{n}}
,
\xi_{m}
\right]
x^{-n}z^{m}\\
%&=2\log x+\sum_{n >0}\frac{-2}{n}
%\left[
%\frac{\partial}{\partial \xi_{n}}
%,
%\xi_{n}
%\right]
%x^{-n}z^{n}\\
&=2 \log x 
-2 \sum_{n > 0}\frac{1}{n}
\left(
\frac{z}{x}
\right)^{n}\\
&=2 \log x +2 \log \left(1-\frac{z}{x}\right)\\
&=2 \log (x-z)
\end{align*}
is central, and moreover we have
\begin{align}
\label{*3}
:e^{mh(x)}::e^{nh(z)}:
=
:e^{mh(x)+nh(z)}:
e^{2mn \log (x-z)}
%=
%:e^{mh(x)+nh(z)}:
%\left(\frac{x}{x-z}\right)^{-2mn}
%e^{2mn \log x}
.
\end{align}

The reader should compare the calculation here with the work in
Section 8.6 of \cite{FLM}.  Let $x_{i},y_{i},z_{i}$ for $i \geq 1$ and
for convenience we also let $x_{0}=y_{0}=z_{0}=0$.  Then let
\begin{align*}
A=e^{\sum_{i \geq 1}-m_{i}h^{-}(x_{i})}e^{-m_{0}\xi_{0}}
\end{align*}
and
\begin{align*}
B=e^{\sum_{j \geq 1}n_{j}h^{-}(y_{j})}e^{n_{0}\xi_{0}},
\end{align*}
where $m_{i},n_{j} \in \mathbb{Z}$ and only finitely many are nonzero
(cf. (8.6.5) in \cite{FLM}).  Then letting $k_{ij}=2m_{i}n_{j}$,
\begin{align*}
&
\phi_{x,y,z}
\Upsilon\left(A,x\right)
\Upsilon\left(B,y\right)\\
&\quad=
\phi_{x,y,z}
:e^{\sum_{i \geq 0}-m_{i}h(x+x_{i})}:
:e^{\sum_{j \geq 0}n_{j}h(y+y_{j})}:\\
&\quad=
\phi_{x,y,z}
:e^{\sum_{i \geq 0}-m_{i}h(x+x_{i})+\sum_{j \geq 0}n_{j}h(y+y_{j})}:
e^{\sum_{i,j \geq 0}-k_{ij}\log(x+x_{i}-y-y_{j})}\\
&\quad=
\phi_{x,y,z}
:e^{\sum_{i \geq 0}-m_{i}h(x+x_{i})+\sum_{j \geq 0}n_{j}h(y+y_{j})}:
\prod_{i,j \geq 0}(x+x_{i}-y-y_{j})^{-k_{ij}},
\end{align*}
by (\ref{*0}), (\ref{*1}) and (\ref{*3}) and its range is
$\Xi((x))[[x_{1},x_{2},\dots]]((y))[[y_{1},y_{2},\dots]]$.
Therefore we may multiply this expression by
$z^{-1}\delta\left(\frac{x-y}{z}\right)$ and Proposition
\ref{prop:deltalogsub} may be applied to give
\begin{align}
\label{eq:ljac1}
\phi_{x,y,z}
&z^{-1}\delta\left(\frac{x-y}{z}\right) 
\Upsilon\left(A,x\right)
\Upsilon\left(B,y\right) \nonumber\\
&=
\phi_{x,y,z}
z^{-1}\delta\left(\frac{x-y}{z}\right) 
:e^{\sum_{i \geq 0}-m_{i}h(x+x_{i})+\sum_{j \geq 0}n_{j}h(y+y_{j})}:
\prod_{i,j \geq 0}(z+x_{i}-y_{j})^{-k_{ij}}.
\end{align}
Similarly
\begin{align}
\label{eq:ljac2}
\phi_{x,y,z}
&z^{-1}\delta\left(\frac{y-x}{-z} \right) 
\Upsilon\left(B,y\right)
\Upsilon\left(A,x\right) \nonumber\\
&=
\phi_{x,y,z}
z^{-1}\delta\left(\frac{y-x}{-z} \right)
:e^{\sum_{i \geq 0}-m_{i}h(x+x_{i})+\sum_{j \geq 0}n_{j}h(y+y_{j})}:
\prod_{i,j \geq 0}(z+x_{i}-y_{j})^{-k_{ij}},
\end{align}
noting that $(-z-x_{i}+y_{j})^{-k_{ij}}=(z+x_{i}-y_{j})^{-k_{ij}}$,
since $k_{ij}$ is always even.

Further
\begin{align*}
\Upsilon\left(
\Upsilon\left(
A,z\right)
B,y\right)
&=
\Upsilon
\left(
:e^{\sum_{i \geq 0}-m_{i}h(z+x_{i})}:
e^{\sum_{j \geq 1}n_{j}h^{-}(y_{j})+n_{0}\xi_{0}},y\right)\\
&=e^{\sum_{i,j \geq 0}-k_{ij}\log(z+x_{i}-y_{j})}
\Upsilon\left(
e^{\sum_{i \geq 0}-m_{i}h^{-}(z+x_{i})+
\sum_{j \geq 1}n_{j}h^{-}(y_{j})+n_{0}\xi_{0}}
,y\right)\\
&=e^{\sum_{i,j \geq 0}-k_{ij}\log(z+x_{i}-y_{j})}
:e^{\sum_{i \geq 0}-m_{i}h(y+z+x_{i})+\sum_{j \geq 0}n_{j}h(y+y_{j})}:,
\end{align*}
by (\ref{*0}), (\ref{*1}) and (\ref{*2}), and its range is
\begin{align*}
\Xi((y))[e^{\log y},e^{-\log y}][[y_{1},y_{2},\dots]]
((z))[e^{\log z},e^{-\log z}][[x_{1},x_{2},\dots]],
\end{align*}
so that we may multiply by $y^{-1}\delta\left(\frac{x-z}{y} \right)$
and Proposition \ref{prop:deltalogsub} applies, giving
\begin{align}
\label{eq:ljac3}
\phi_{x,y,z}
&x^{-1}\delta\left(\frac{y+z}{x} \right) 
\Upsilon\left(
\Upsilon\left(
A,z\right)
B,y\right) \nonumber\\
&=
\phi_{x,y,z}
x^{-1}\delta\left(\frac{y+z}{x} \right)
\prod_{i,j \geq 0}(z+x_{i}-y_{j})^{-k_{ij}}
:e^{\sum_{i \geq 0}-m_{i}h(x+x_{i})+\sum_{j \geq 0}n_{j}h(y+y_{j})}:.
\end{align}
Now adding the left hand sides of (\ref{eq:ljac1}), (\ref{eq:ljac2})
and (\ref{eq:ljac3}) together, factoring and using
Proposition~\ref{delta}, we get:
\begin{align}
\label{eq:jacgen}
\phi_{x,y,z}
z^{-1}&\delta\left(\frac{x-y}{z}\right) 
\Upsilon\left(A,x\right)
\Upsilon\left(B,y\right) 
-
\phi_{x,y,z}
z^{-1}\delta\left(\frac{y-x}{-z} \right) 
\Upsilon\left(B,y\right)
\Upsilon\left(A,x\right) \nonumber\\
&\quad=
\phi_{x,y,z}
x^{-1}\delta\left(\frac{y+z}{x} \right) 
\Upsilon\left(
\Upsilon\left(
A,z\right)
B,y\right).
\end{align}
\section{A vertex algebra}
The operator $\Upsilon(\cdot,x)$ is almost a vertex operator, but we
need to specialize $e^{\log x}$ to $x$.  So we let
\begin{align}
\label{vop}
Y(\cdot, x)\cdot =\phi_{x}\circ \Upsilon(\cdot, x)\cdot 
\end{align}
so that 
\begin{align*}
Y(\cdot,x)\cdot:\Xi \otimes \Xi \rightarrow \Xi((x)) 
\end{align*}
is a linear map.

We are now ready to recall the definition of a vertex algebra.
Individual mathematical vertex operators were introduced in \cite{LW}.
The notion of vertex algebra was first mathematically defined by
Borcherds in \cite{B}.  An equivalent set of axioms, based primarily
on the Jacobi identity, which we shall use, appeared first in
\cite{FLM} as part of the notion of vertex operator algebra (cf.
\cite{LL}, in which the equivalence of the two sets of axioms is
proved).
\begin{defi} \rm
A \it{vertex algebra} \rm is a vector space equipped,
first, with a linear map (the \it{vertex operator map}) $V
\otimes V \rightarrow V[[x,x^{-1}]]$, \rm or equivalently, a linear
map
\begin{align*}
Y(\,\cdot\,,x): \quad &V \, \rightarrow \, (\text{\rm End}V)[[x,x^{-1}]]\\
&v \, \mapsto \, Y(v,x)=\sum_{n \in \mathbb{Z}}v_{n}x^{-n-1}.
\end{align*}
We call $Y(v,x)$ the \it{vertex operator associated with} $v$.  \rm We
assume that
\begin{align*}
Y(u,x)v \in V((x))
\end{align*}
for all $u,v \in V$.  There is also a distinguished 
element $\textbf{1}$ satisfying the following
\it{vacuum property} \rm:
\begin{align*}
Y(\textbf{1},x)=1
\end{align*}
and \it{creation property} \rm:
\begin{align*}
Y(u,x)\textbf{1} \in V[[x]] \text{ and}\\
Y(u,0)\textbf{1}=u  \text{ for all }u \in V.
\end{align*}
Finally, we require that the
\it{Jacobi identity} \rm is satisfied:
\begin{align*}
z^{-1}\delta\left(\frac{x-y}{z}\right) 
Y(u,x)Y(v,y)&-
z^{-1}\delta\left(\frac{y-x}{-z} \right) 
Y(v,y)Y(u,x)\\
&=
x^{-1}\delta\left(\frac{y+z}{x} \right) 
Y(Y(u,z)v,y).
\end{align*}
\end{defi}
\begin{theorem}
\label{ert}
  The space $\Xi$ equipped with $Y(\cdot,x)\cdot$ as defined
  by (\ref{vop}), is a vertex algebra, with vacuum vector $1$.
\end{theorem}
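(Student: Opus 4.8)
The plan is to verify the axioms of a vertex algebra one at a time, leaning on the fact that most of the structural work has already been done in Sections 4 and 5. First I would dispense with the easy axioms. The vacuum property $Y(\mathbf 1,x)=1$ is immediate from $\Upsilon(1,x)=1$ (equation (\ref{vac})) and the fact that $\phi_x$ fixes $1$. For the creation property, I would note that for each generator $\xi_n$ ($n\ge 1$), $\Upsilon(\xi_n,x)=h^{(n)}(x)/n!$, and since $h(x)=h^+(x)+h^-(x)$ with $h^-(x)=\sum_{m\ge 0}\xi_m x^m$ and $h^+(x)$ carrying only strictly negative powers of $x$ together with a $\log x$ term, the only part of $\Upsilon(\xi_n,x)1$ lying in $\Xi[[x]]$ after applying $\phi_x$ comes from $h^-(x)$, and its value at $x=0$ is $\xi_n$. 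For $e^{m\xi_0}$ one similarly reads off from the factored formula for $:e^{mh(x)}:$ in Proposition~5.2 that $Y(e^{m\xi_0},x)1\big|_{x=0}=e^{m\xi_0}$ (the operator $e^{mh^+(x)-2\frac{\partial}{\partial\xi_0}\log x}$ acts as the identity on $1$ once $\phi_x$ replaces $e^{\log x}$ by $x$ and we set $x=0$, and $e^{mh^-(x)-mx_0}$ with $x_0=0$ contributes $e^{m\xi_0}$ at $x=0$). The general case follows since $\Upsilon$ is normal-multiplicative (equation (\ref{*0})) and a product of creation-type operators applied to $1$ again lies in $\Xi[[x]]$ with the correct constant term. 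The truncation condition $Y(u,x)v\in\Xi((x))$ is exactly the stated codomain of $Y(\cdot,x)\cdot$ in (\ref{vop}).

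The substantive point is the Jacobi identity, and here the strategy is to deduce it from the general identity (\ref{eq:jacgen}) by specialization. The elements $A$ and $B$ appearing in Section~5 are built as products $e^{\sum_{i\ge 1}-m_ih^-(x_i)}e^{-m_0\xi_0}$ and $e^{\sum_{j\ge 1}n_jh^-(y_j)}e^{n_0\xi_0}$; expanding these in powers of the auxiliary variables $x_i,y_j$ and extracting coefficients produces, up to scalars, arbitrary monomials in the generators $e^{\pm\xi_0},\xi_1,\xi_2,\dots$ — that is, a spanning set of $\Xi$. So I would argue that (\ref{eq:jacgen}), read as an identity of formal series in the $x_i$ and $y_j$, specializes coefficient-by-coefficient to the Jacobi identity for $Y$ applied to an arbitrary pair of basis vectors $u=u(\{\xi\}), v=v(\{\xi\})$, using bilinearity of all three terms in $u$ and $v$ to pass to general elements. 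One must check that $\phi_{x,y,z}$ on (\ref{eq:jacgen}) really does compute $z^{-1}\delta(\tfrac{x-y}{z})Y(u,x)Y(v,y)$, etc.; this is where the example computations $\phi_x e^{-2\log(x-z)}=(x-z)^{-2}$ and the displayed manipulation $\phi_x$ commuting with $\frac{d}{dx}$ get used, together with the observation that $\phi_x$ applied to $\Upsilon(\cdot,x)\cdot$ is by definition $Y(\cdot,x)\cdot$, and that $\phi_z$ applied to the inner $\Upsilon(A,z)$ turns it into $Y(A,z)$ inside the iterate.

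The main obstacle, and the step I would treat most carefully, is the interchange of the substitution operators $\phi_{x,y,z}$ with the delta-function manipulations and with the passage from the auxiliary-variable generating identity to the specialized one. Concretely: Proposition~\ref{prop:deltalogsub} was invoked to get (\ref{eq:ljac1})--(\ref{eq:ljac3}), and Proposition~\ref{delta} to combine them into (\ref{eq:jacgen}); I need to confirm that after setting the $x_i,y_j$ to extract a given monomial, the hypotheses of those propositions (the codomain/truncation conditions and the existence of the formal limit $x\to y$) are still met, so that no term is lost or miscounted. I expect this to go through because the stated ranges — e.g. $\Xi((x))[[x_1,x_2,\dots]]((y))[[y_1,y_2,\dots]]$ for $\Upsilon(A,x)\Upsilon(B,y)$ — are preserved under the specializations, and the evenness of $k_{ij}=2m_in_j$ (already used to justify $(-z-x_i+y_j)^{-k_{ij}}=(z+x_i-y_j)^{-k_{ij}}$) removes the only potential sign ambiguity. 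Finally I would remark that (\ref{eq:jacgen}) holds for the specific $A,B$ of the above form and that linearity extends the Jacobi identity to all of $\Xi\otimes\Xi$, completing the verification of all the axioms.
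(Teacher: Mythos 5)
Your strategy coincides with the paper's: dispose of the vacuum, creation and truncation axioms directly, then obtain the Jacobi identity by extracting coefficients of the auxiliary variables $x_{i},y_{j}$ from the generating-function identity (\ref{eq:jacgen}) and invoking bilinearity. The one genuine gap is in the spanning step. You assert that extracting coefficients of the $x_{i},y_{j}$ from $A=e^{\sum_{i\geq 1}-m_{i}h^{-}(x_{i})}e^{-m_{0}\xi_{0}}$ ``produces, up to scalars, arbitrary monomials in the generators.'' That is not literally true: the coefficient of $x_{i}^{k}$ in $e^{-m_{i}\sum_{n\geq 1}\xi_{n}x_{i}^{n}}$ is not a monomial in the $\xi_{n}$ but the polynomial $p_{k}$ determined by $e^{\sum_{n\geq 1}\xi_{n}y^{n}}=\sum_{k\geq 0}p_{k}y^{k}$ (with the $\xi_{n}$ rescaled), and the coefficients of $A$ are products of such $p_{k}$'s times a power of $e^{\xi_{0}}$. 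So what you get directly is only the subalgebra generated by the $p_{k}$ and $e^{\pm\xi_{0}}$, and you still owe an argument that this is all of $\Xi$. The paper supplies it in one line: since $\sum_{n\geq 1}\xi_{n}y^{n}=\log\left(1+\sum_{k\geq 1}p_{k}y^{k}\right)$, each $\xi_{n}$ is a polynomial in the $p_{k}$, so $\mathbb{C}[\xi_{1},\xi_{2},\dots]=\mathbb{C}[p_{1},p_{2},\dots]$ and the coefficients of $A$ (and of $B$) indeed span $\Xi$. Without this observation the reduction of the Jacobi identity for arbitrary $u,v\in\Xi$ to (\ref{eq:jacgen}) is incomplete; with it, your argument closes.

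Two smaller remarks. Your concern about re-verifying the hypotheses of Propositions \ref{delta} and \ref{prop:deltalogsub} after specializing is unnecessary: (\ref{eq:jacgen}) is already an identity of formal series in all of the variables, so taking the coefficient of a monomial in the $x_{i},y_{j}$ is a purely linear operation requiring no further justification. For the creation property, your generator-by-generator check is fine (though note $h^{+}(x)1=0$ outright, since the $\partial/\partial\xi_{k}$ annihilate $1$ --- it is not that the $h^{+}$ contribution fails to lie in $\Xi[[x]]$); the paper instead verifies creation once for the generating element $A$ and lets the same spanning argument handle general vectors, which is slightly more economical.
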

\begin{proof}
  The minor conditions, such as truncation have already been dealt
  with.  The vacuum property follows by (\ref{vac}).  To see the
  creation property, note that
\begin{align*}
Y(A,x)1&=\phi_{x}:e^{\sum_{i \geq 0}-m_{i}h(x+x_{i})}:1
=e^{\sum_{i \geq 0}-m_{i}h^{-}(x+x_{i})},
\end{align*}
which is a power series in $x$ and we may substitute $x=0$ which is
easily seen to give
\begin{align*}
Y(A,0)1=A.
\end{align*}
Noting (\ref{eq:jacgen}), we will be done if we can show that the
coefficients in $A$ and $B$ span $\Xi$ as we let their parameters
vary.

The following is essentially the same observation as in Remark 8.3.9
in \cite{FLM}.  Let
\begin{align*}
e^{\sum_{n \geq 1}\xi_{n}y^{n}}=\sum_{k \geq 0}p_{k}y^{k}
%\end{align*}
\quad \text{so that} \quad
%\begin{align*}
\sum_{n \geq 1}\xi_{n}y^{n}
=\log\left(1+\sum_{k \geq 1}p_{k}y^{k}\right),
\end{align*}
which shows that any polynomial in the $\xi_{i}$ $i \geq 1$ is a
polynomial in the $p_{i}$ for $i \geq 1$.  This gives the result.
\end{proof}

\noindent {\small \sc Department of Mathematics, Rutgers University,
Piscataway, NJ 08854} 
\\ {\em E--mail
address}: thomasro@math.rutgers.edu

\end{document}